\tikzset{help lines/.style={color=blue!50,very thin}}
\def\part#1{\frac{\partial\phantom{#1}}{\partial#1}}
\newtheorem{thm}{Theorem}
\newtheorem{lemma}[thm]{Lemma}
\newenvironment{proof}{\begin{trivlist}\item[]{\bf Proof} }%
{\hfill $\Box$ \end{trivlist}}
\newenvironment{definition}{\begin{trivlist}\item[]{\bf Definition}\em }%
{\end{trivlist}}
\newenvironment{remark}{\begin{trivlist}\item[]{\bf Remark} }%
{\end{trivlist}}
\newenvironment{example}{\begin{trivlist}\item[]{\bf Example} }%
{\end{trivlist}}
\newenvironment{question}{\begin{trivlist}\item[]{\bf Question} }%
{\end{trivlist}}
\def\Z{\ifmmode{{\mathbb Z}}\else{${\mathbb Z}$}\fi}
\def\Q{\ifmmode{{\mathbb Q}}\else{${\mathbb Q}$}\fi}
\def\C{\ifmmode{{\mathbb C}}\else{${\mathbb C}$}\fi}
\def\P{\ifmmode{{\mathbb P}}\else{${\mathbb P}$}\fi}
\def\H{\ifmmode{{\mathrm H}}\else{${\mathrm H}$}\fi}
\def\B{\ifmmode{{\mathcal B}}\else{${\mathcal B}$}\fi}
\def\E{\ifmmode{{\mathcal E}}\else{${\mathcal E}$}\fi}
\def\F{\ifmmode{{\mathcal F}}\else{${\mathcal F}$}\fi}
\def\K{\ifmmode{{\mathcal K}}\else{${\mathcal K}$}\fi}
\def\L{\ifmmode{{\mathcal L}}\else{${\mathcal L}$}\fi}
\def\M{\ifmmode{{\mathcal M}}\else{${\mathcal M}$}\fi}
\def\N{\ifmmode{{\mathcal N}}\else{${\mathcal N}$}\fi}
\def\O{\ifmmode{{\mathcal O}}\else{${\mathcal O}$}\fi}
\def\U{\ifmmode{{\mathcal U}}\else{${\mathcal U}$}\fi}
\def\V{\ifmmode{{\mathcal V}}\else{${\mathcal V}$}\fi}
\def\X{\ifmmode{{\mathcal X}}\else{${\mathcal X}$}\fi}
\def\Br{\ifmmode{{\mathrm{Br}}}\else{${\mathrm{Br}}$}\fi}
\def\OG{\ifmmode{\widetilde{\cal M}_4}\else{$\widetilde{\cal M}_4$}\fi}
\def\D{\ifmmode{{\mathcal D}^b}\else{${{\mathcal
    D}^b}$}\fi}
\def\Shah{\ifmmode{\amalg\hspace*{-3.5pt}\amalg}\else{$\amalg\hspace*{-3.5pt}\amalg$}\fi}
\begin{document}

\title{Deformations of compact Prym fibrations to Hitchin systems\footnote{2020 {\em Mathematics Subject
  Classification.\/} 14D06, 14J42, 53C26.}}
\author{Justin Sawon and Chen Shen}
\date{February, 2021}
\maketitle

\begin{abstract}
We describe certain Lagrangian fibrations by Prym varieties on holomorphic symplectic varieties that deform to compactifications of $\mathrm{Sp}$-Hitchin systems.
\end{abstract}

\maketitle

\section{Introduction}

The Beauville-Mukai integrable system is given by taking the relative compactified Jacobian of a complete linear system of curves in a K3 surface~\cite{beauville99, mukai84}. The total space is a holomorphic symplectic manifold and the fibres are Lagrangian. The $\mathrm{GL}$-Hitchin integrable system is given by taking the moduli space of $\mathrm{GL}$-Higgs bundles on a Riemann surface $\Sigma$ and mapping to the eigenvalues of the Higgs field~\cite{hitchin87}. The fibres are (compactified) Jacobians of spectral curves, which lie in $T^*\Sigma$. The total space is a non-compact holomorphic symplectic manifold but it admits a natural compactification induced by the one-point compactification of $T^*\Sigma$. Donagi, Ein, and Lazarsfeld~\cite{del97} showed that if $\Sigma$ is contained in a K3 surface, the Beauville-Mukai system of this K3 surface can be deformed to the above compactification of the $\mathrm{GL}$-Hitchin system.

The goal of the present article is to extend this deformation result to the $\mathrm{Sp}$-Hitchin system~\cite{hitchin87}. In this case, one can show that the spectral curves are invariant under multiplication by $-1$ in the fibres of $T^*\Sigma$. Thus the spectral curves are double covers of curves in the total space of $K^2_{\Sigma}$, the quotient of $T^*\Sigma=K_{\Sigma}$ by this involution. The fibres of the $\mathrm{Sp}$-Hitchin system are Prym varieties of these double covers of curves. To construct the corresponding compact Lagrangian fibration by Prym varieties we follow the ideas of Markushevich and Tikhomirov~\cite{mt07} (also Arbarello, Sacc{\`a}, and Ferretti~\cite{asf15} and Matteini~\cite{matteini14, matteini16}). Let $S$ be a K3 surface that is a (branched) double cover of a del Pezzo surface $T$. We take a complete linear system of curves in $T$, their double covers in $S$, and construct the relative Prym variety of this family. The total space is a holomorphic symplectic variety, and the Prym fibres are Lagrangian. We call this the {\em K3-del Pezzo system of $S/T$\/}. 

Now suppose that $\Sigma$ is isomorphic to the ramification locus $\Delta_S\subset S$ of the double cover $S\rightarrow T$. By deforming the K3 surface $S$ to the one-point compactification of $T^*\Sigma=K_{\Sigma}$, and the del Pezzo surface $T$ to the one-point compactification of $K^2_{\Sigma}$, we induce a deformation of the K3-del Pezzo system of $S/T$ to a compactification of the $\mathrm{Sp}$-Hitchin system.

\begin{thm}
There exists a $\P^1$-family $\mathcal{P}_{X/Y}(w)\rightarrow\P^1$ of Lagrangian fibrations such that for $t\neq 0$ we can identify $\mathcal{P}_{X/Y}(w)_t$ with the K3-del Pezzo system, while for $t=0$ we can identify $\mathcal{P}_{X/Y}(w)_0$ with a natural compactification of the $\mathrm{Sp}$-Hitchin system. In this way we get a deformation of the K3-del Pezzo system to a compactification of the $\mathrm{Sp}$-Hitchin system.
\end{thm}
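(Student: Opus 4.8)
The plan is to follow the Donagi–Ein–Lazarsfeld deformation strategy, carrying the extra involution along throughout. First I would fix the geometric input: $\Sigma \cong \Delta_S$, a smooth curve of genus $g$ in a K3 surface $S$ that is a double cover $\pi\colon S\to T$ of a del Pezzo surface $T$, branched over $\Delta_T$, with $\Delta_S = \pi^{-1}(\Delta_T)$. One constructs a one-parameter family $\mathcal{S}\to\mathbb{P}^1$ of surfaces whose general fibre is $S$ and whose central fibre is (a minimal resolution of) the one-point compactification $\overline{K_\Sigma}$ of the total space of the canonical bundle $K_\Sigma = T^*\Sigma$; simultaneously one builds a compatible family $\mathcal{T}\to\mathbb{P}^1$ with general fibre $T$ and central fibre $\overline{K^2_\Sigma}$, together with a relative double cover $\pi_{\mathcal{S}/\mathcal{T}}\colon \mathcal{S}\to\mathcal{T}$ restricting to $\pi$ over $t\neq 0$ and to the natural $2:1$ map $\overline{K_\Sigma}\to\overline{K^2_\Sigma}$ over $t=0$. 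The existence of such a degeneration of the pair $(S,T)$, with $\Sigma$ sitting as the branch curve in every fibre, is the analogue of the DEL construction of a degeneration of $(S,\Sigma)$ to $\overline{K_\Sigma}$; here the point is that the DEL deformation is equivariant for the involution, so it descends to $T$.

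Next I would globalize the linear systems and the spectral construction. Over $\mathbb{P}^1$ there is a relative linear system $|w|$ of curves in the fibres of $\mathcal{T}/\mathbb{P}^1$ — for $t\neq 0$ these are the curves on $T$ whose double covers in $S$ appear in the K3–del Pezzo system, and for $t=0$ these specialize to the spectral curves $\overline{C}\subset \overline{K^2_\Sigma}$ of $\mathrm{Sp}$-Higgs bundles (the curves cut out by the characteristic coefficients, compactified by adding the point at infinity). Pulling back via $\pi_{\mathcal{S}/\mathcal{T}}$ gives a family of double covers $\widetilde{C}\to \overline{C}$, and over each $t$ one forms the relative compactified Prym variety $\mathcal{P}_{X/Y}(w)_t$ — here $Y\to\mathbb{P}^1$ is the total space of the relative linear system, $X\to Y$ the relative Prym family. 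By the earlier part of the paper, for $t\neq 0$ this is the K3–del Pezzo system of $S/T$ (a holomorphic symplectic variety with Lagrangian Prym fibres), and I would identify $\mathcal{P}_{X/Y}(w)_0$ with the moduli space of $\mathrm{Sp}$-Higgs bundles on $\Sigma$, compactified by the divisor coming from the boundary $\overline{C}\setminus C$ — exactly as in the $\mathrm{GL}$ case the compactified Jacobian of the compactified spectral curve is the DEL compactification of $T^*\mathrm{Bun}$. The relative holomorphic symplectic form is constructed fibrewise (from the K3 side for $t\neq 0$ and from the Hitchin form for $t=0$) and one checks it extends; the Prym fibres are isotropic of half dimension, hence Lagrangian, in every fibre.

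The main obstacle, as in DEL, is the \emph{flatness and smoothness of the total family} near $t=0$: one must show that the relative Prym family does not acquire unexpected components or singularities as the ambient surface degenerates, and that the symplectic form genuinely extends over the central fibre rather than degenerating. Concretely this requires (i) controlling the behaviour of the spectral curves and their double covers over $t=0$ — they become non-compact (before compactification) and the involution has the correct fixed locus so that the Prym, rather than the full Jacobian, is the right object; (ii) verifying that the one-point compactifications $\overline{K_\Sigma}$, $\overline{K^2_\Sigma}$ (and the resolutions of their $A_1$, resp. quotient, singularities at infinity) fit into the families $\mathcal{S},\mathcal{T}$ compatibly with $\pi$; and (iii) matching the polarization/Mukai vector data $w$ so that $\mathcal{P}_{X/Y}(w)$ is a single irreducible component deforming flatly. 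I expect (i)–(ii) to be the technical heart, handled by an explicit local analysis at infinity together with the equivariance of the DEL degeneration, while the Lagrangian property and the identifications for $t\neq0$ and $t=0$ separately are comparatively formal given the constructions recalled above.
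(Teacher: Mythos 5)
Your overall strategy is the paper's: run the Donagi--Ein--Lazarsfeld degeneration equivariantly for the covering involution, obtain a family of surface double covers $X_t\rightarrow Y_t$ specializing $S\rightarrow T$ to $\overline{K}_{\Sigma}\rightarrow\overline{K^2_{\Sigma}}$, and cut out the relative Prym inside the relative moduli space of sheaves. But there is a genuine gap at the step you treat as formal: you propose to ``form the relative compactified Prym variety over each $t$'' fibrewise, which does not by itself produce a family over $\P^1$, and you never say how the Prym condition is imposed globally. The paper's actual technical content is precisely this point: one must extend the duality $L\mapsto L^{\vee}$ from line bundles on smooth curves to an operation on all sheaves in $\mathcal{M}_{X/\P^1}(w)$, uniformly in $t$, so that composing with $\sigma^*$ gives a single (fibrewise symplectic) involution $\tau$ whose fixed locus is a closed family. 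This is done by the formula $\mathcal{F}\mapsto\mathcal{E}xt_X^2(\mathcal{F},\O_X(-X_t-2n\Delta))$ on the threefold $X$, and it requires nontrivial input: $X$ is singular, so one needs it to be normal and Gorenstein with log canonical singularities (Coughlan--Sano, Fujino) for the relevant $\mathcal{E}xt$ vanishing and biduality to hold, and even then the involution is only rational --- it fails for sheaves whose support passes through the apex of the cone $X_0$, which is why $\mathcal{P}_{X/Y}(w)$ must be defined as the closure of a component of the fixed locus on the regular locus of $\tau$. None of this appears in your sketch, and without it the object $\mathcal{P}_{X/Y}(w)\rightarrow\P^1$ is not constructed.

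Two smaller points. First, you suggest replacing $\overline{K}_{\Sigma}$ and $\overline{K^2_{\Sigma}}$ by resolutions of their singularities at infinity; the paper deliberately does not do this (the one-point compactifications are the hyperplane sections of the cones over $S$ and $T$, and resolving would change the central fibre and break the identification with the cone construction). The $A_1$-locus on $Y$ is understood via the weighted $(1,2)$ blow-up of $\Delta_T\subset T\times\{0\}$, not removed. Second, your worry about ``smoothness of the total family'' is misplaced: these Prym fibrations are genuinely singular (with no symplectic resolution in general), so the relevant issue is not smoothness but the well-definedness and closedness of the fixed locus of $\tau$, which is exactly the gap identified above.
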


This result was announced in Sawon~\cite{sawon20} and described in Chapter 8 of (Chen) Shen's thesis~\cite{shen20}. A possible application is to the P=W conjecture of de Cataldo, Hausel, and Migliorini~\cite{dhm12}, which relates the perverse and weight filtrations on the rational cohomology of a Lagrangian fibration. This conjecture was originally made for Hitchin systems, and proved for $\mathrm{GL}(2)$-, $\mathrm{SL}(2)$-, and $\mathrm{PGL}(2)$-Hitchin systems on curves of arbitrary genus~\cite{dhm12}. A numerical version of the conjecture was proved for Lagrangian fibrations on smooth compact holomorphic symplectic manifolds by (Junliang) Shen and Yin~\cite{sy19} (for a related result in the compact case, see also Harder, Li, Shen, and Yin~\cite{hlsy19}). Subsequently, de Cataldo, Maulik, and Shen~\cite{dms19} proved the conjecture for $\mathrm{GL}(n)$-Hitchin systems on genus two curves by using an analogue of Donagi, Ein, and Lazarsfeld's deformation coming from the deformation of an abelian surface to the normal cone of an embedded curve. The Debarre integrable system on the generalized Kummer variety~\cite{debarre99} deforms to a compactification of the $\mathrm{SL}$-Hitchin system on a genus two curve; however, de Cataldo, Maulik, and Shen~\cite{dms20} showed that there are obstructions to using this deformation to prove the P=W conjecture in this case. Note that these results are for twisted Hitchin systems, where the moduli spaces are smooth. Felisetti and Mauri~\cite{fm20} considered some untwisted Hitchin systems, namely those that admit symplectic desingularizations, and proved the P=W conjecture for their resolutions (and the PI=WI conjecture for intersection cohomology of the singular spaces). Again, these appear as deformations of certain compact Lagrangian fibrations, including O'Grady's spaces, though these deformations are not explicitly used to prove the conjectures. Our deformation could be used to prove the P=W conjecture (or rather the PI=WI conjecture, since our spaces are singular) for some $\mathrm{Sp}$-Hitchin systems. One would need to first prove the conjecture for the K3-del Pezzo systems, and then investigate the behavior of the perverse and weight filtrations under the deformation.

Here is an outline of our paper. In Section~2 we recall the construction of the $\mathrm{GL}$-Hitchin and Beauville-Mukai systems, and describe the deformation of Donagi, Ein, and Lazarsfeld. In Section~3 we recall the construction of the $\mathrm{Sp}$-Hitchin system and describe the construction of the K3-del Pezzo system. We then construct the deformation leading to the proof of Theorem~1 (which is restated in more detail as Theorem~5). 

The authors gratefully acknowledge support from the NSF grant DMS-1555206.

\section{$\mathrm{GL}$-Hitchin and Beauville-Mukai systems}

In this section we recall the deformation result of Donagi, Ein, and Lazarsfeld~\cite{del97}.

\subsection{$\mathrm{GL}$-Hitchin systems}

We start by describing the $\mathrm{GL}$-Hitchin system~\cite{hitchin87}. Let $\Sigma$ be a smooth curve of genus $g\geq 2$. A Higgs bundle $(E,\phi)$ is a pair consisting of a holomorphic vector bundle $E$ on $\Sigma$ and a (holomorphic) Higgs field $\phi\in\Gamma(\Sigma,\mathrm{End}E\otimes K_{\Sigma})$. It is stable (semistable) if
$$\frac{\mathrm{deg}F}{\mathrm{rank}F}<(\leq)\frac{\mathrm{deg}E}{\mathrm{rank}E}$$
for all $\phi$-invariant subbundles $F\subset E$. Denote by $\mathcal{M}_{\mathrm{GL}}(n,d)$ the moduli space of ($S$-equivalence classes of) semistable Higgs bundles of rank $n$ and degree $d$; it is a quasi-projective variety. Moreover, if $n$ and $d$ are coprime then every semistable Higgs bundle is stable and $\mathcal{M}_{\mathrm{GL}}(n,d)$ is smooth.

Denote by $\mathcal{N}(n,d)$ the moduli space of stable bundles on $\Sigma$ of rank $n$ and degree $d$. Its tangent space at $E$ is given by $T_E\mathcal{N}(n,d)=\mathrm{Ext}^1(E,E)$, and any choice of $\phi$ in
$$\mathrm{Ext}^1(E,E)^*\cong\Gamma(\Sigma,\mathrm{End}E\otimes K_{\Sigma})$$
gives a stable Higgs bundle $(E,\phi)$. Thus the cotangent bundle $T^*\mathcal{N}(n,d)$ naturally embeds in $\mathcal{M}_{\mathrm{GL}}(n,d)$ as an open subset. Moreover, the cotangent bundle admits a canonical (holomorphic) symplectic structure given by
$$\sigma((\dot{A}_1,\dot{\phi}_1),(\dot{A}_2,\dot{\phi}_2))=\int_{\Sigma}\mathrm{tr}(\dot{A}_1\dot{\phi}_2-\dot{A}_2\dot{\phi}_1)$$
where $\dot{A}_i\in\mathrm{Ext}^1(E,E)=\mathrm{H}^1(\Sigma,\mathrm{End}E)$ and $\dot{\phi}_i\in\Gamma(\Sigma,\mathrm{End}E\otimes K_{\Sigma})$. This structure extends to $\mathcal{M}_{\mathrm{GL}}(n,d)$, making it a symplectic variety.

The $\mathrm{GL}$-Hitchin system~\cite{hitchin87} is an algebraic completely integrable Hamiltonian system on $\mathcal{M}_{\mathrm{GL}}(n,d)$ given by mapping a Higgs bundle $(E,\phi)$ to
$$(\mathrm{tr}\phi,\mathrm{tr}\phi^2,\ldots,\mathrm{tr}\phi^n)\in A_{\mathrm{GL}}:=\bigoplus_{i=1}^n\Gamma(\Sigma,K_{\Sigma}^i).$$
Locally on $\Sigma$, $\phi$ is an $n\times n$ matrix of one-forms, and the map above takes $(E,\phi)$ to (symmetric polynomials in) the $n$ eigenvalues of $\phi$, giving us $n$ local sections of $K_{\Sigma}$. Globally, a point in $A_{\mathrm{GL}}$ corresponds to an $n$-valued multi-section of $K_{\Sigma}=T^*\Sigma$, i.e., a curve $C\subset T^*\Sigma$ that maps $n$-to-$1$ to $\Sigma$ under the projection $\pi:T^*\Sigma\rightarrow\Sigma$. We call $C$ the spectral curve of $(E,\phi)$. The fibre of the $\mathrm{GL}$-Hitchin system above this point is the Jacobian of $C$, or the compactified Jacobian if $C$ is singular. A pair $(C,L)$ consisting of a spectral curve $C$ and a line bundle $L\in\mathrm{Jac}C$ is known as the spectral data of $(E,\phi)$. Note that $C$ encodes the eigenvalues of $\phi$ and $L$ encodes the eigenspaces. We can recover $(E,\phi)$ from its spectral data $(C,L)$ by $E=\pi_*L$ and $\phi$ is $\pi_*$ of the map
$$L\longrightarrow L\otimes\pi^*K_{\Sigma}$$
given by tensoring with the canonical section of $\pi^*K_{\Sigma}$. This is known as the spectral construction. In order to get $\mathrm{deg}E=d$, the line bundle $L$ on $C$ must have degree $d+n(n-1)(g-1)$. Note that $C$ has genus
$$n^2(g-1)+1=\mathrm{dim}A_{\mathrm{GL}}=\frac{1}{2}\mathrm{dim}\mathcal{M}_{\mathrm{GL}}(n,d).$$

\subsection{Beauville-Mukai systems}

Next we describe the Beauville-Mukai integrable system (first introduced by Mukai~\cite{mukai84}, but later studied in more detail by Beauville~\cite{beauville99}). Let $S$ be a K3 surface containing a smooth curve $\Sigma$ of genus $g\geq 2$. Define $\mathcal{M}_S(v)$ to be the moduli space of semistable sheaves on $S$ with Mukai vector
$$v:=(0,[n\Sigma],k-n^2(g-1))\in\H^{\bullet}(S,\Z).$$
A general element of $\mathcal{M}_S(v)$ looks like $\iota_*L$ where $L$ is a degree $k$ line bundle on a smooth curve $C$ in the linear system $|n\Sigma|$, and $\iota:C\hookrightarrow S$ is the inclusion. Assume that $S$ is generic in the sense that its N{\'e}ron-Severi group $\mathrm{NS}(S)$ is generated over $\Z$ by $\Sigma$. Then if $n$ and $k$ are coprime every semistable sheaf with Mukai vector $v$ is stable and $\mathcal{M}_S(v)$ is smooth and compact. Mukai~\cite{mukai84} proved that $\mathcal{M}_S(v)$ admits a (holomorphic) symplectic structure. There is a natural map $\mathcal{M}_S(v)\rightarrow |n\Sigma|$ taking each sheaf to its (scheme-theoretic) support. Donagi, Ein, and Lazarsfeld~\cite{del97} showed directly that this is a Lagrangian fibration; it also follows by a general result of Matsushita~\cite{matsushita99} that was proved slightly later.

A general point in the base $|n\Sigma|$ represents a smooth curve $C$. The fibre above it is the Jacobian $\mathrm{Jac}^kC$. Thus $\mathcal{M}_S(v)$ can be viewed as a compactification of the relative Jacobian of the family of curves $\mathcal{C}\rightarrow |n\Sigma|$ linearly equivalent to $n\Sigma$. Note that these curves have genus $n^2(g-1)+1$, and this is also the dimension of the base $|n\Sigma|$.

\subsection{Donagi, Ein, and Lazarsfeld's deformation}
\label{K3curve}

Both the $\mathrm{GL}$-Hitchin system and the Beauville-Mukai system are (compactified) relative Jacobians of families of curves in symplectic surfaces, the surfaces being $K_{\Sigma}=T^*\Sigma$ and the K3 surface $S$ respectively. In order to relate the integrable systems, we need to deform one surface to the other. 

First we take the one-point compactification $\overline{K}_{\Sigma}$ of $K_{\Sigma}$. We can think of this as $\P(K_{\Sigma}\oplus\O_{\Sigma})$ with the section at infinity blown down to a single point. If $\Sigma$ is not hyperelliptic then $\overline{K}_{\Sigma}$ can also be identified with the cone over the canonical embedding $\Sigma\hookrightarrow\P(\H^0(\Sigma,K_{\Sigma})^{\vee})\cong\P^{g-1}$. If $\Sigma$ is hyperelliptic, the canonical map is two-to-one onto its image, which is a rational normal curve in $\P^{g-1}$, and $\overline{K}_{\Sigma}$ will be a double cover of the cone over the rational normal curve. For simplicity, we shall assume henceforth that $\Sigma$ is not hyperelliptic.

Now suppose that $\Sigma$ is also contained in a K3 surface $S$, and assume that $S$ is otherwise generic in the sense that $\mathrm{NS}(S)\cong\Z[\Sigma]$. Together with the fact that $\Sigma$ is not hyperelliptic, this implies that $\O(\Sigma)$ is very ample and we have an embedding
$$S\hookrightarrow\P(\H^0(S,\Sigma)^{\vee})\cong\P^g.$$
Restricting to $\Sigma\subset S$ gives the canonical embedding $\Sigma\hookrightarrow\P^{g-1}$ into a hyperplane of $\P^g$. Now take the cone over $S$ in $\P^{g+1}$. There is a pencil of hyperplanes in $\P^{g+1}$ that contain $\Sigma$. Intersecting them with the cone over $S$ gives a pencil of surfaces $X\rightarrow\P^1$ with the following properties:
\begin{itemize}
\item if the hyperplane contains the apex of the cone then the intersection is the cone over the canonical embedding of $\Sigma$, i.e., this surface $X_0$ can be identified with $\overline{K}_{\Sigma}$,
\item if the hyperplane does not contain the apex of the cone then the intersection is isomorphic to $S$ by projecting along the rays of the cone, i.e., these surfaces $X_t$ can all be identified with $S$.
\end{itemize}
Since the pencil of hyperplanes all meet in the $\P^{g-1}$ containing $\Sigma$, $X$ is naturally embedded in $\mathrm{Blow}_{\P^{g-1}}\P^{g+1}$.

\begin{remark}
The family of surfaces $X\rightarrow\P^1$ is actually trivial over $\P^1-\{0\}$. Here is another way to construct it that makes this more readily apparent. Start with the threefold $S\times\P^1$ in $\P^g\times\P^1$ and blow up the curve
$$\Sigma\subset S\times\{0\}\subset S\times\P^1.$$
The normal bundle of $\Sigma$ in $S$ is $K_{\Sigma}$, so the normal bundle of $\Sigma$ in $S\times\P^1$ is $K_{\Sigma}\oplus\O_{\Sigma}$. The exceptional locus is therefore $\P(K_{\Sigma}\oplus\O_{\Sigma})$. Now we blow down the proper transform of $S\times\{0\}$, which meets $\P(K_{\Sigma}\oplus\O_{\Sigma})$ along the section at infinity. This gives precisely $X$. It is a trivial fibration over $\P^1-\{0\}$ with fibres isomorphic to $S$, and the fibre over $0$ is $\P(K_{\Sigma}\oplus\O_{\Sigma})$ with the section at infinity blown down, i.e., it is $\overline{K}_{\Sigma}$. In addition, the corresponding birational modifications of the ambient space $\P^g\times\P^1$, namely, taking the hyperplane
$$\P^{g-1}\subset\P^g\times\{0\}$$
containing $\Sigma$ and blowing it up in $\P^g\times\P^1$, and then blowing down the proper transform of $\P^g\times\{0\}$, produce $\mathrm{Blow}_{\P^{g-1}}\P^{g+1}$, the ambient space of $X$.
\end{remark}

Each of the surfaces $X_t$ contains the curve $\Sigma$, as the zero section of $K_{\Sigma}$ in $X_0\cong\overline{K}_{\Sigma}$ for $t=0$, and through the identification $X_t\cong S$ for $t\neq 0$. Thus they each contain a linear system $|n\Sigma|$. These linear systems each have dimension $n^2(g-1)+1$ and we denote the resulting $\P^{n^2(g-1)+1}$-bundle over $\P^1$ by $P\rightarrow\P^1$. Note that for $t=0$ there is an open subset of $|n\Sigma|$ parametrizing curves that do not meet the apex of $\overline{K_{\Sigma}}$; these are precisely the spectral curves of the $\mathrm{GL}$-Hitchin system, so we can identify this open subset with $A_{\mathrm{GL}}$. 

Let $w\in\mathrm{H}^{\bullet}(X,\Z)$ denote the Chern class of a degree $k$ line bundle on a curve in one of the linear systems $|n\Sigma|$, pushed forward to give a torsion sheaf on $X$. Following Simpson's construction~\cite{simpson94} of moduli spaces of sheaves in the relative setting, we define $\mathcal{M}_{X/\P^1}(w)$ to be the moduli space of semistable sheaves on $X$ with Chern class $w$ that are supported in fibres of $X\rightarrow\P^1$. Each such sheaf will be supported on a curve in a linear system $|n\Sigma|$ for some $X_t$, and thus there is a (scheme-theoretic) support map
$$\begin{array}{ccc}
\mathcal{M}_{X/\P^1}(w) & \longrightarrow & P \\
 & \searrow & \downarrow \\
 & & \mathbb{P}^1.
 \end{array}$$

\begin{thm}[Donagi, Ein, and Lazarsfeld~\cite{del97}]
For $t\neq 0$ we have $X_t\cong S$ and the above map can be identified with the Beauville-Mukai system $\mathcal{M}_S(v)\rightarrow |n\Sigma|$. For $t=0$ we have $X_0\cong\overline{K}_{\Sigma}$ and the above map can be identified with the natural compactification of the $\mathrm{GL}$-Hitchin system $\mathcal{M}_{\mathrm{GL}}(n,d)\rightarrow A_{\mathrm{GL}}$ induced by the inclusions $K_{\Sigma}\subset\overline{K}_{\Sigma}$ and $A_{\mathrm{GL}}\subset |n\Sigma|$. Here $k$ and $d$ are related by $k=d+n(n-1)(g-1)$. In this way we get a deformation of the Beauville-Mukai system to a compactification of the $\mathrm{GL}$-Hitchin system.
\end{thm}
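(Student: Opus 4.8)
The plan is to derive all three assertions from one structural input. By Simpson's construction applied to the flat family of projective surfaces $X\to\P^1$, one gets a projective moduli space $\mathcal{M}_{X/\P^1}(w)\to\P^1$, flat over $\P^1$, whose fibre over $t$ is the moduli space $\mathcal{M}_{X_t}(w_t)$ of semistable sheaves on $X_t$ with Chern class $w_t:=w|_{X_t}$, and the support map to $P$ restricts fibrewise to the ordinary support map $\mathcal{M}_{X_t}(w_t)\to|n\Sigma|$. Since $n$ and $k$ are coprime, on every fibre semistability coincides with stability, so there are no strictly semistable sheaves, the moduli space is fine, and no $S$-equivalence is collapsed along the family; the surfaces that occur are $S$ (for $t\neq0$) and $\overline{K}_\Sigma$ (for $t=0$), the latter with only the cone singularity at the apex. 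Granting this, the theorem reduces to identifying each fibre with the claimed system, together with the observation that in each case the fibres of the support map are Lagrangian (by Matsushita's theorem on the K3 side, and by construction on the Hitchin side), so we really obtain a deformation of Lagrangian fibrations.

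For $t\neq0$ the isomorphism $X_t\cong S$ given by projection along the rays of the cone carries $|n\Sigma|$ on $X_t$ to $|n\Sigma|$ on $S$ and carries $w_t$ to the Mukai vector $v=(0,[n\Sigma],k-n^2(g-1))$: a curve $C\in|n\Sigma|$ on $S$ has arithmetic genus $1+\tfrac12(n\Sigma)\cdot(n\Sigma+K_S)=n^2(g-1)+1$ (as $K_S=0$), so a degree $k$ line bundle on it pushes forward to a pure sheaf with Mukai vector $(0,[n\Sigma],\chi)$, where $\chi=k+1-p_a(C)=k-n^2(g-1)$. Hence $\mathcal{M}_{X_t}(w_t)=\mathcal{M}_S(v)$ and the support map is the Beauville-Mukai fibration. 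For $t=0$ we have $X_0\cong\overline{K}_\Sigma$; over the open subset $A_{\mathrm{GL}}\subset|n\Sigma|$ of curves missing the apex, that is, curves $C\subset K_\Sigma$ finite of degree $n$ over $\Sigma$ — the spectral curves — the spectral construction of Section~2.1 gives a bijection between pairs $(C,L)$ with $L\in\mathrm{Jac}^kC$ and rank $n$ Higgs bundles $(E,\phi)$, $E=\pi_*L$. The Riemann-Roch identity $\chi(E)=\chi(\pi_*L)=\chi(L)$ forces $\deg E=k-n(n-1)(g-1)$, i.e.\ $k=d+n(n-1)(g-1)$; and stability of $\iota_*L$ matches stability of $(E,\phi)$, which is automatic on the dense locus of integral spectral curves (there the characteristic polynomial of $\phi$ is irreducible). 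Thus $\mathcal{M}_{X_0}(w_0)$ restricts over $A_{\mathrm{GL}}$ to the Hitchin system $\mathcal{M}_{\mathrm{GL}}(n,d)\to A_{\mathrm{GL}}$, and since $\mathcal{M}_{X_0}(w_0)\to|n\Sigma|$ is projective and extends this over the open set $A_{\mathrm{GL}}\subset|n\Sigma|$, it is by definition the induced compactification.

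The hard part will be the analysis at $t=0$: verifying that Simpson's moduli space over the singular surface $\overline{K}_\Sigma$ is indeed the expected compactification — that the sheaves supported on curves through the apex form precisely the desired boundary, with no component of the moduli space lost or created in the limit $t\to0$. I would control this by a dimension count combined with the flatness of $\mathcal{M}_{X/\P^1}(w)\to\P^1$: all fibres have dimension $2(n^2(g-1)+1)$, the general fibre of the support map over $|n\Sigma|$ is a (compactified) Jacobian, and flatness then forces the $t=0$ fibre to be the flat limit of the Beauville-Mukai systems, which the spectral description pins down over $A_{\mathrm{GL}}$. Once every fibre is identified, the tower $\mathcal{M}_{X/\P^1}(w)\to P\to\P^1$ is the asserted deformation of the Beauville-Mukai system to a compactification of the $\mathrm{GL}$-Hitchin system.
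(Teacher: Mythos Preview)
The paper does not supply its own proof of this theorem: it is quoted as a result of Donagi, Ein, and Lazarsfeld~\cite{del97}, with the construction of the family $X\to\P^1$, the $\P^{n^2(g-1)+1}$-bundle $P$, and the relative moduli space $\mathcal{M}_{X/\P^1}(w)$ laid out in the paragraphs of Section~\ref{K3curve} immediately preceding the statement. So there is no in-paper argument to compare against; one can only check your proposal against that construction and the definitions.

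Your argument is correct and is the natural one suggested by the paper's setup. The identification for $t\neq 0$ is exactly as you say: the isomorphism $X_t\cong S$ carries $w_t$ to the Mukai vector $v=(0,[n\Sigma],k-n^2(g-1))$, and the genus/Riemann--Roch computation you give matches what the paper records in Sections~2.1--2.2. For $t=0$, note that the paper \emph{defines} the ``natural compactification'' of the Hitchin system to be the moduli space of sheaves on $\overline{K}_\Sigma$ induced by the inclusion $K_\Sigma\subset\overline{K}_\Sigma$; so the identification $\mathcal{M}_{X_0}(w_0)\cong$ (compactified Hitchin system) is essentially tautological once you have established the spectral correspondence over the open locus $A_{\mathrm{GL}}\subset|n\Sigma|$, which you do. Your degree relation $k=d+n(n-1)(g-1)$ agrees with what the paper states at the end of Section~2.1, and the observation $\gcd(n,k)=\gcd(n,d)$ justifies carrying coprimality across.

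The part you flag as ``hard'' --- controlling the boundary over curves through the apex and ruling out extraneous components in the $t=0$ limit --- is indeed where the work lies in~\cite{del97}, but given the paper's phrasing (the compactification is \emph{by definition} $\mathcal{M}_{\overline{K}_\Sigma}(w_0)$) you do not actually need irreducibility or a delicate flatness argument here; you are perhaps being more cautious than the statement requires.
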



\begin{remark}
The above construction relies on the property that $\Sigma$ can be embedded in $S$. An interesting question is: when can a given curve be embedded into some K3 surface (in which case we call it a {\em K3 curve\/})? A parameter count shows that this is not possible for a general curve $\Sigma$ of genus $\geq 12$, and thus the $\mathrm{GL}$-Hitchin system associated to $\Sigma$ cannot be deformed to a Beauville-Mukai system. See Arbarello, Bruno, and Sernesi~\cite{abs14, abs17} for recent progress on this question.
\end{remark}

\section{$\mathrm{Sp}$-Hitchin systems and compact Prym fibrations}

Our goal in this section is to describe an analogue for $\mathrm{Sp}$-Hitchin systems of Donagi, Ein, and Lazarsfeld's result.

\subsection{$\mathrm{Sp}$-Hitchin systems}

Previously we described Higgs bundles for the gauge group $\mathrm{GL}(n,\C)$. If instead the gauge group is the symplectic group $\mathrm{Sp}(2n,\C)$ then $E$ will have even rank $2n$ and admit a non-degenerate skew-symmetric two-form, and the Higgs field $\phi$ will lie in $\Gamma(\Sigma,\mathfrak{sp}(2n,\C)\otimes K_{\Sigma})$. Denote by $\mathcal{M}_{\mathrm{Sp}}(2n)$ the moduli space of ($S$-equivalence classes of) semistable $\mathrm{Sp}$-Higgs bundles of rank $2n$; it is a quasi-projective variety. Note that the skew-symmetric two-form on $E$ forces its degree to be zero.

Given an $\mathrm{Sp}$-Higgs bundle $(E,\phi)$, the $2n$ eigenvalues of $\phi$ occur in plus/minus pairs and therefore $\mathrm{tr}\phi^i=0$ for all odd $i$. The Hitchin map is given by mapping an $\mathrm{Sp}$-Higgs bundle $(E,\phi)$ to
$$(\mathrm{tr}\phi^2,\mathrm{tr}\phi^4,\ldots,\mathrm{tr}\phi^{2n})\in A_{\mathrm{Sp}}:=\bigoplus_{j=1}^n\Gamma(\Sigma,K_{\Sigma}^{2j}).$$
This makes $\mathcal{M}_{\mathrm{Sp}}(2n)$ into an algebraic completely integrable Hamiltonian system. Like before, a point in $A_{\mathrm{Sp}}$ corresponds to a $2n$-valued multi-section of $K_{\Sigma}$, i.e., a spectral curve $C\subset K_{\Sigma}$ that projects $2n$-to-$1$ to the zero section $\Sigma$. Because the eigenvalues of $\phi$ occur in plus/minus pairs, the spectral curve $C$ will be invariant under the action of multiplication by $-1$ in the fibres of $K_{\Sigma}$. Quotienting by this action gives a curve $D$ in $K_{\Sigma}/\pm 1\cong K_{\Sigma}^2$, and thus we have a commutative diagram
$$\begin{array}{ccc}
C & \subset & K_{\Sigma} \\
_{2:1}\downarrow & & \downarrow _{2:1} \\
D & \subset & K_{\Sigma}^2. \\
\end{array}$$
Note that $C$ is in $|2n\Sigma|$, the linear system of $2n$ times the zero section in $K_{\Sigma}$, and therefore it has genus $g(C)=4n^2(g-1)+1$. The curve $D$ is in $|n\Sigma|$, the linear system of $n$ times the zero section in $K_{\Sigma}^2$, and therefore it has genus $g(D)=(2n^2-n)(g-1)+1$. Generically $D$ meets the zero section transversely in $4n(g-1)$ points, meaning that the double cover $C\rightarrow D$ has $4n(g-1)$ simple branch points; this agrees with Hurwitz's formula.

Hitchin~\cite{hitchin87} showed that the fibre of the $\mathrm{Sp}$-Hitchin system above a general point in $A_{\mathrm{Sp}}$ corresponding to $C\rightarrow D$ can be identified with the Prym variety of this double cover, namely
$$\mathrm{Prym}(C/D):=\{L\;|\;\sigma^*L\cong L^{\vee}\}^0\subset\mathrm{Jac}^0C,$$
where $\sigma:C\rightarrow C$ is the covering involution of $C\rightarrow D$ and the superscript $^0$ denotes the connected component of the trivial line bundle.

\begin{remark}
Classically, ``Prym variety'' is used to refer to the case where $C\rightarrow D$ is unbranched or has just two branch points, in which case $\mathrm{Prym}(C/D)$ is principally polarized, though here we use ``Prym variety'' in a more general sense, allowing $4n(g-1)>2$ branch points. In particular, $\mathrm{Prym}(C/D)$ will have polarization of type $(1,\ldots,1,2,\ldots,2)$ with $g(D)$ $2$s. Its dimension is given by
$$g(C)-g(D)=(2n^2+n)(g-1)=\mathrm{dim}A_{\mathrm{Sp}}=\frac{1}{2}\mathrm{dim}\mathcal{M}_{\mathrm{Sp}}(2n).$$
\end{remark}

To recover the Higgs bundle $(E,\phi)$ from the spectral data $C\rightarrow D$ and $L\in\mathrm{Prym}(C/D)$ we need to choose a square root of $K_{\Sigma}$, and then $E=\pi_*(L\otimes\pi^*K_{\Sigma}^{n-1/2})$. Note that the projection $\pi:C\rightarrow \Sigma$ is a $2n$-to-$1$ cover so $L\otimes\pi^*K_{\Sigma}^{n-1/2}$ will be a line bundle on $C$ of degree $2n(2n-1)(g-1)$, and we will recover $\mathrm{deg}E=0$ as required. See Hitchin~\cite{hitchin87} or Schaposnik~\cite{schaposnik18} for details.

\subsection{Compact Prym fibrations}

Compact Lagrangian fibrations by Prym varieties were first constructed by Markushevich and Tikhomirov~\cite{mt07} from K3 double covers of degree two del Pezzo surfaces. Other examples were constructed by Arbarello, Sacc{\`a}, and Ferretti~\cite{asf15} from K3 double covers of Enriques surfaces, and by Matteini~\cite{matteini16} from K3 double covers of degree three del Pezzo surfaces. Matteini~\cite{matteini14} considered the general construction from a K3 surface with an anti-symplectic involution. Our description below follows the ideas from these articles.

For the $\mathrm{Sp}$-Hitchin system the spectral curve $C\subset K_{\Sigma}$ is a branched double cover of $D\subset K_{\Sigma}^2$, and the covering involution $C\rightarrow C$ comes from the covering involution $\sigma:K_{\Sigma}\rightarrow K_{\Sigma}$ of $K_{\Sigma}\rightarrow K^2_{\Sigma}$, which is given by multiplication by $-1$ in the fibres and is anti-symplectic. For a compact analogue we start with a branched double cover $\pi:S\rightarrow T$ of a K3 surface over a del Pezzo surface, with anti-symplectic covering involution $\sigma:S\rightarrow S$. Denote by $d=K_T^2\in\{1,\ldots,9\}$ the degree of the del Pezzo surface. The branch locus $\Delta_T\subset T$ must belong to the linear system $|-2K_T|$. Write $\Delta_S\subset S$ for the ramification locus $\pi^{-1}\Delta_T$. For the $\mathrm{Sp}$-Hitchin system the double cover $K_{\Sigma}\rightarrow K_{\Sigma}^2$ is ramified/branched along the zero section, the spectral curves in $K_{\Sigma}$ lie in $|2n\Sigma|$, and their images in $K_{\Sigma}^2$ lie in $|n\Sigma|$ (where we use $\Sigma$ to denote the zero section in both $K_{\Sigma}$ and $K_{\Sigma}^2$). In our compact analogue, we therefore consider the linear systems $|2n\Delta_S|$ and $|n\Delta_T|$ in $S$ and $T$, respectively. In particular, the $\sigma$-fixed locus $|2n\Delta_S|^{\sigma}$ contains a component isomorphic to $|n\Delta_T|$, parametrizing double covers $C\rightarrow D$ where $C\subset S$ is a $\sigma$-invariant curve and $D=C/\sigma$. A calculation shows that $\Delta_S\cong\Delta_T$ has genus $g_{\Delta}=d+1$, $C$ has genus $g_C=4n^2d+1$, $D$ has genus $g_D=n(2n-1)d+1$, and generically the double cover $C\rightarrow D$ has exactly $4nd$ branch points.

We want to construct the relative Prym variety of this family of double covers of curves. For smooth $C\rightarrow D$ the Prym variety $\mathrm{Prym}(C/D)$ is defined as (the component containing $\O_C$ of) the fixed locus of the involution $L\mapsto (\sigma^*L)^{\vee}$ on $\mathrm{Jac}^0C$. We can define the relative Prym variety as the fixed locus of an involution on a family of Jacobians. In fact, we start with the moduli space $\mathcal{M}_S(v)$ of semistable sheaves on $S$ with Mukai vector
$$v:=(0,[2n\Delta_S],-(2n)^2(g_{\Delta}-1))=(0,[2n\Delta_S],-4n^2d)\in\H^{\bullet}(S,\Z),$$
whose general element looks like $\iota_*L$ where $L$ is a degree $0$ line bundle on a smooth curve $C$ in the linear system $|2n\Delta_S|$. This is a singular Lagrangian fibration over $|2n\Delta_S|$: because the Mukai vector is not primitive there will be strictly semistable sheaves where the moduli space is singular. If we choose a $\sigma$-invariant divisor like $\Delta_S$ to polarize $S$, then the anti-symplectic involution $\sigma^*$ will be well-defined on $\mathcal{M}_S(v)$; in particular, it will take semistable sheaves to semistable sheaves.

For a line bundle $L$ on a smooth curve $C$, duality $^{\vee}$ on the fibre $\mathrm{Jac}^0C$ is given by $L^{\vee}:=\mathcal{H}om_C(L,\O_C)$. To extend this to an involution on the whole moduli space we first observe that
$$\iota_*L^{\vee}:=\iota_*\mathcal{H}om_C(L,\O_C)\cong\mathcal{H}om_S(\iota_*L,\O_C)\cong\mathcal{E}xt_S^1(\iota_*L,\O_S(-C)).$$
where $\iota:C\hookrightarrow S$ is inclusion. To see the last isomorphism, apply $\mathcal{H}om_S(\iota_*L,-)$ to the short exact sequence
$$0\longrightarrow\O_S(-C)\longrightarrow\O_S\longrightarrow\O_C\longrightarrow 0,$$
giving
$$0\longrightarrow\mathcal{H}om_S(\iota_*L,\O_C)\longrightarrow\mathcal{E}xt_S^1(\iota_*L,\O_S(-C))\longrightarrow\mathcal{E}xt_S^1(\iota_*L,\O_S).$$
The last map is induced by multiplication by the section of $\O_S(C)$ that vanishes on $C$, and is therefore the zero map.

Now $\mathcal{E}xt_S^1(-,\O_S(-C))$ is better behaved on the whole moduli space. Indeed, for a sheaf $\mathcal{F}$ of pure dimension one on $S$, $\mathcal{E}xt_S^i(\mathcal{F},\O_S(-C))$ vanishes for $i\neq 1$ by Proposition~1.1.10 of Huybrechts and Lehn~\cite{hl97}. Up to tensoring with a line bundle
$$\mathcal{E}xt_S^1(\mathcal{F},\O_S(-C))\cong\mathcal{E}xt_S^1(\mathcal{F},\omega_S)\otimes\O_S(-C))$$
is the dual sheaf of $\mathcal{F}$, according to Definition~1.1.7 of~\cite{hl97}, and the dual of the dual is isomorphic to the original sheaf by Proposition~1.1.10 of~\cite{hl97}. Moreover, one can show that $\mathcal{F}\mapsto\mathcal{E}xt_S^1(\mathcal{F},\O_S(-C))$ preserves semistability and is therefore an involution on $\mathcal{M}_S(v)$. This involution is antisymplectic because it is antisymplectic on the open set corresponding to line bundles supported on smooth curves, where it is given in local coordinates by multiplication by $-1$ in the fibres of the Lagrangian fibration $\mathcal{M}_S(v)\rightarrow |2n\Delta_S|$.

These involutions commute and their composition gives a symplectic involution
\begin{eqnarray*}
\tau:\mathcal{M}_S(v) & \longrightarrow & \mathcal{M}_S(v) \\
\mathcal{F} & \longmapsto & \mathcal{E}xt_S^1(\sigma^*\mathcal{F},\O_S(-C)).
\end{eqnarray*}
Finally, we define our relative Prym variety to be
$$\mathcal{P}_{S/T}(v):=\mathrm{Fix}(\tau)^0\subset\mathcal{M}_S(v)$$
where the superscript $^0$ denotes the connected component of $\iota_*\O_C$.

Note that a sheaf $\mathcal{F}$ in $\mathcal{M}_S(v)$ is supported on a curve $C$ in $|2n\Delta_S|$, and if $\mathcal{F}$ is fixed by $\tau$ then $C$ must be $\sigma$-invariant. Assume in addition that $C$ is smooth; if $\mathcal{F}$ is fixed by $\tau$ then it must lie in $\mathrm{Prym}(C/D)$. In this way, taking the fixed locus of $\tau$ picks out both $\sigma$-invariant curves and line bundles in the Prym varieties of those curves.

\begin{thm}[\cite{mt07,matteini14,asf15,matteini16}]
The relative Prym variety $\mathcal{P}_{S/T}(v)$ is a singular Lagrangian fibration over $|n\Delta_T|\cong\P^{n(2n+1)d}$. Its general fibre is a Prym variety $\mathrm{Prym}(C/D)$ for a smooth double cover of curves $C\rightarrow D$, which is an abelian variety of dimension
$$g_C-g_D=n(2n+1)d.$$
\end{thm}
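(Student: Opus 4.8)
The plan is to transfer the known structure of the Beauville--Mukai system $\mathcal{M}_S(v)\to|2n\Delta_S|$ to the fixed locus of $\tau$, using the general principle that the fixed locus of an anti-symplectic involution (here $\tau$ is symplectic, but it is built from the commuting anti-symplectic involutions $\sigma^*$ and $\mathcal{E}xt^1_S(-,\mathcal{O}_S(-C))$) inside a holomorphic symplectic variety is again, on its smooth locus, holomorphic symplectic, and that Lagrangian fibrations restrict to Lagrangian fibrations. First I would observe that the support map $\mathcal{M}_S(v)\to|2n\Delta_S|$ is $\tau$-equivariant: $\tau$ acts on the base by $C\mapsto \sigma(C)$, whose fixed locus is $|2n\Delta_S|^\sigma$, and the component of $|2n\Delta_S|^\sigma$ on which $\iota_*\mathcal{O}_C$ lives is exactly $|n\Delta_T|$ (curves of the form $\pi^{-1}D$). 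Hence restricting the support map to $\mathcal{P}_{S/T}(v)=\mathrm{Fix}(\tau)^0$ gives a morphism $\mathcal{P}_{S/T}(v)\to|n\Delta_T|$, and by Riemann--Roch on the del Pezzo surface $T$ one computes $\dim|n\Delta_T|=\dim|{-nK_T}|=n(2n+1)d$; I would do this dimension count explicitly via $h^0(T,-nK_T)=\chi(-nK_T)=1+\tfrac12(-nK_T)\cdot(-nK_T-K_T)$.

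Next I would analyze a general fibre. Over a general point of $|n\Delta_T|$ the curve $D\subset T$ is smooth, $C=\pi^{-1}D\subset S$ is smooth (transversality of $D$ with $\Delta_T$, generic in the linear system), and the fibre of $\mathcal{M}_S(v)$ over $[C]$ is $\mathrm{Jac}^0 C$ with its two commuting involutions $L\mapsto(\sigma^*L)^\vee$ (symplectic translation-type) and $L\mapsto L^\vee\otimes(\text{fixed})$; the fixed locus of their composition, intersected with the component of $\mathcal{O}_C$, is by definition $\mathrm{Prym}(C/D)$, which is an abelian subvariety of $\mathrm{Jac}^0 C$. Its dimension is $g_C-g_D$, and plugging in $g_C=4n^2d+1$, $g_D=n(2n-1)d+1$ gives $g_C-g_D=n(2n+1)d$, matching $\dim|n\Delta_T|$; this equality is the numerical signature of a Lagrangian fibration. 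I would also note that the general fibre is connected and irreducible because a Prym variety of a connected étale-or-branched double cover is, so $\mathrm{Fix}(\tau)^0$ restricted over the general locus is a fibre bundle of abelian varieties of the expected dimension.

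For the Lagrangian property I would argue as follows. On the open locus of $\mathcal{M}_S(v)$ corresponding to line bundles on smooth $\sigma$-invariant curves, the holomorphic symplectic form $\omega$ is $\tau$-invariant (both constituent involutions are anti-symplectic, so their composite preserves $\omega$), so $\omega$ restricts to a holomorphic $2$-form on the smooth locus of $\mathrm{Fix}(\tau)$; standard linear algebra for a symplectic involution shows the fixed locus is a symplectic submanifold, i.e. $\omega|_{\mathrm{Fix}(\tau)}$ is nondegenerate there, so $\mathcal{P}_{S/T}(v)$ is a (singular) holomorphic symplectic variety of dimension $2n(2n+1)d$. The fibres of $\mathcal{P}_{S/T}(v)\to|n\Delta_T|$ sit inside the fibres of $\mathcal{M}_S(v)\to|2n\Delta_S|$, which are Lagrangian in $\mathcal{M}_S(v)$ by Theorem~3 (Donagi--Ein--Lazarsfeld, or Matsushita), hence $\omega$ vanishes on them; since a Prym fibre has exactly half the dimension of $\mathcal{P}_{S/T}(v)$, it is Lagrangian, and then Matsushita's theorem (or a direct argument) promotes $\mathcal{P}_{S/T}(v)\to|n\Delta_T|$ to a Lagrangian fibration. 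Finally I would address singularities: since $v=(0,[2n\Delta_S],-4n^2d)$ is $2n$ times the primitive vector $(0,[\Delta_S],\ast)$, strictly semistable sheaves exist and $\mathcal{M}_S(v)$, hence $\mathcal{P}_{S/T}(v)$, is singular along the corresponding strata; I would simply record this (the detailed local structure of these singularities is in~\cite{mt07,asf15,matteini16} for small $n$).

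The main obstacle is the Lagrangian claim at the level of the singular variety rather than just the smooth locus: one must know that the generic fibre genuinely has dimension $n(2n+1)d$ (equivalently, that $\mathrm{Prym}(C/D)$ does not drop dimension, which is where the branched-Prym computation $g_C-g_D$ is essential) and that the $2$-form extends across, or at least controls, the singular locus well enough that ``Lagrangian fibration'' is the correct terminology. For the purposes of this paper I would handle this by citing the explicit analyses in~\cite{mt07,matteini14,asf15,matteini16} for the cases they treat and checking that the numerical coincidence $\dim|n\Delta_T| = g_C - g_D = n(2n+1)d$ holds in general, which is the content of the dimension bookkeeping above.
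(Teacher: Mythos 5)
Your overall strategy is the right one and is essentially the argument that the paper itself assembles (and then defers to \cite{mt07,matteini14,asf15,matteini16}): restrict the $\tau$-equivariant support map to the fixed-locus component over $|n\Delta_T|$, identify the general fibre with $\mathrm{Prym}(C/D)$, note that the fixed locus of the \emph{symplectic} involution $\tau$ is symplectic on its smooth part, and get the Lagrangian property from isotropy (inherited from the Lagrangian Jacobian fibres of $\mathcal{M}_S(v)$) together with the half-dimension count $g_C-g_D=\dim|n\Delta_T|$. Two local slips should be fixed. First, since $\Delta_T\in|-2K_T|$ we have $|n\Delta_T|=|-2nK_T|$, not $|-nK_T|$; Riemann--Roch applied to $-nK_T$ as you wrote it yields $\dim=\tfrac12 n(n+1)d$, which does \emph{not} equal $n(2n+1)d$, whereas applied to $-2nK_T$ it gives $\chi(-2nK_T)=1+\tfrac12(-2nK_T)\cdot(-2nK_T-K_T)=1+(2n^2+n)d$, i.e.\ $\dim|n\Delta_T|=n(2n+1)d$. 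Since this is exactly the numerical coincidence on which the Lagrangian claim rests, the slip is worth correcting. Second, your description of the fibre is garbled: $\mathrm{Prym}(C/D)$ is (the identity component of) the fixed locus of the \emph{single} involution $L\mapsto(\sigma^*L)^{\vee}$, which is already the composition of the two commuting anti-symplectic involutions $\sigma^*$ and $L\mapsto L^{\vee}$; it is not the fixed locus of the composition of $L\mapsto(\sigma^*L)^{\vee}$ with $L\mapsto L^{\vee}$ --- that composition is $\sigma^*$ alone, whose fixed locus in $\mathrm{Jac}^0C$ is essentially $\pi^*\mathrm{Jac}^0D$, of dimension $g_D$ rather than $g_C-g_D$. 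With these repairs the sketch is sound; your final caveat about the singular locus and about whether the $2$-form behaves well there is appropriately handled, as you say, by citing the detailed local analyses in the references.
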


\begin{definition}
For convenience, in this article we will call $\mathcal{P}_{S/T}(v)\rightarrow\P^{n(2n+1)d}$ the K3-del Pezzo system.
\end{definition}

\begin{remark}
Strictly speaking, these systems do not include those studied by Markushevich and Tikhomirov~\cite{mt07} and by Matteini~\cite{matteini16}: in their examples $C$ lies in a linear system different to $|2n\Delta_S|$. Likewise, the examples of Arbarello, Sacc{\`a}, and Ferretti~\cite{asf15} come from K3 double covers of Enriques surfaces. However, the construction follows all of their ideas so we cite them in the theorem above.
\end{remark}

\begin{remark}
A major goal in this area is to construct new examples of smooth holomorphic symplectic manifolds. Unfortunately the examples above (and in~\cite{mt07,matteini14,matteini16}) always have singularities that do not admit symplectic resolutions. Something similar is true if we start with a K3 double cover of an Enriques surface: in some cases a symplectic resolution does not exist, while in other cases it exists but leads to a known example~\cite{asf15}. The problem arises when $C=C_1\cup C_2$ is the union of two $\sigma$-invariant curves that meet in $C_1.C_2\geq 4$ points. In this case a local analysis of $\mathcal{P}_{S/T}(v)$ shows that it has $\mathbb{Q}$-factorial and terminal singularities at points corresponding to polystable sheaves supported on $C$; it follows that no symplectic resolution exists (see Sections~4 and~5 of~\cite{asf15} for details).

\end{remark}

\subsection{The deformation}

In this section we describe how to deform the K3-del Pezzo system to a compactification of the $\mathrm{Sp}$-Hitchin system. We first describe the deformation of surface double covers.

\subsubsection{Deforming surface double covers}

As described in Section~\ref{K3curve}, if a curve $\Sigma$ (that we assume to be non-hyperelliptic) is contained in a K3 surface $S$ then we can deform $S$ to the cone over the canonical embedding of $\Sigma$. Specifically, there is a family of surfaces $X\rightarrow\P^1$ such that $X_0\cong \overline{K}_{\Sigma}$ and $X_t\cong S$ for all $t\neq 0$. Now let us assume that the K3 surface is a double cover $\pi:S\rightarrow T$ over a degree $d$ del Pezzo surface, branched over $\Delta_T$, and let the curve $\Sigma$ be $\Delta_S=\pi^{-1}\Delta_T$. There are two ways to proceed: either we extend the involution on $S$ to an involution on $X$, or we describe a deformation of $T$ to $\overline{K^2_{\Sigma}}$ and then define its double cover.

For the first approach we use the description of $X$ from the remark in Section~\ref{K3curve}: namely, $X$ is $S\times\P^1$ with $\Sigma\subset S\times\{0\}$ blown up and the proper transform of $S\times\{0\}$ blown down. The involution on $S$ induces an involution on $S\times\P^1$ in the obvious way. Because this involution fixes $\Sigma\subset S\times\{0\}$, the involution extends to the blow-up
$$\mathrm{Blow}_{\Sigma}S\times\P^1.$$
Moreover, the induced action on the normal bundle $K_{\Sigma}\oplus\O_{\Sigma}$ of $\Sigma$ in $S\times\P^1$ is by $-1$ on $K_{\Sigma}$ and $1$ on $\O_{\Sigma}$, so the induced action on the exceptional locus $\P(K_{\Sigma}\oplus\O_{\Sigma})$ is by $-1$ on the $\P^1$-fibres, fixing the zero and infinity sections. The proper transform of $S\times\{0\}$ is preserved by the action and so there is an induced action on the blow-down, which is $X$. The final result is an involution on $X$, that we again denote by $\sigma$, which is
\begin{itemize}
\item multiplication by $-1$ in the fibres of $X_0\cong\overline{K}_{\Sigma}$, fixing the zero section $\Sigma\subset K_{\Sigma}$ and the singular point at infinity of $\overline{K}_{\Sigma}$,
\item the covering involution of $S\rightarrow T$ for $X_t\cong S$ when $t\neq 0$.
\end{itemize}
Quotienting by this involution gives a family of surfaces $Y\rightarrow\P^1$ such that $Y_0\cong\overline{K^2_{\Sigma}}$ and $Y_t\cong T$ for $t\neq 0$.

\begin{remark}
The fixed locus of the involution on the blow-up $\mathrm{Blow}_{\Sigma}S\times\P^1$ consists of two components: the infinity section of the exceptional locus $\P(K_{\Sigma}\oplus\O_{\Sigma})$ and the proper transform of $\Sigma\times\P^1$, which meets the exceptional locus $\P(K_{\Sigma}\oplus\O_{\Sigma})$ along the zero section. Because the first component is codimension two, if we quotient $\mathrm{Blow}_{\Sigma}S\times\P^1$ by the involution we will get a family of $A_1$-singularities along this component: locally it will look like $\C\times(\C^2/\pm 1)$. We will need this observation shortly.
\end{remark}

For the second approach we start with the threefold $T\times\P^1$ and blow up the curve
$$\Sigma\cong\Delta_T\subset T\times\{0\}\subset T\times\P^1.$$
The result, $\mathrm{Blow}_{\Sigma}T\times\P^1$, should be the quotient of $\mathrm{Blow}_{\Sigma}S\times\P^1$ by the involution. However, as observed above, this quotient is singular, whereas $\mathrm{Blow}_{\Sigma}T\times\P^1$ is smooth. To correct this, we instead perform a weighted blow-up, as described in Section 10 of Koll{\'a}r and Mori~\cite{km92} using toric geometry.

\begin{example}
The cone spanned by $e_1$ and $e_2$ inside $\mathbb{R}^2$ corresponds to the affine toric variety $\C^2$. If $a_1$ and $a_2$ are coprime positive integers, then introducing the vector $a_1e_1+a_2e_2$ corresponds to a weighted blow-up $\mathrm{Blow}_{a_1,a_2}\C^2$ of the origin $(0,0)$ in $\C^2$, with exceptional locus isomorphic to the weighted projective line $\P(a_1,a_2)$. Moreover, $\mathrm{Blow}_{a_1,a_2}\C^2$ can be covered by two charts, isomorphic to $\C^2/\Z_{a_1}$ and $\C^2/\Z_{a_2}$, where the cyclic groups act by
$$(x,y)\sim (e^{2\pi i/a_1} x,e^{-a_22\pi i/a_1}y)\qquad\mbox{and}\qquad (x,y)\sim (e^{-a_12\pi i/a_2} x,e^{2\pi i/a_2} y)$$
respectively. (This corresponds to $m=1$ in the notation of~\cite{km92}.)

In particular, for $(a_1,a_2)=(1,2)$ we get a weighted blow-up $\mathrm{Blow}_{1,2}\C^2$ of $\C^2$ which is covered by two charts $\C^2$ and $\C^2/\pm 1$. Note that $0$ in the exceptional locus $\P(1,2)$ is a smooth point of $\mathrm{Blow}_{1,2}\C^2$, whereas $\infty\in\P(1,2)$ is the $A_1$-singularity $\C^2/\pm 1$.
\end{example}

We can perform a weighted blow-up of the curve
$$\Sigma\cong\Delta_T\subset T\times\{0\}\subset T\times\P^1.$$
Locally this will look like $\C\times\mathrm{Blow}_{1,2}\C^2$, where the first $\C$ is in the direction along the curve $\Sigma$. Now because $\Delta_T$ is in the linear system $|-2K_T|$, its normal bundle in $T$ is $K^{-2}_T|_{\Delta_T}$, which by adjunction is $K_{\Delta_T}^2$. The normal bundle of $\Sigma$ in $T\times\P^1$ is therefore $K^2_{\Sigma}\oplus\O_{\Sigma}$ and the exceptional locus of the weighted blow-up is $\P_{1,2}(K^2_{\Sigma}\oplus\O_{\Sigma})$. The total space will be smooth along the zero section of $\P_{1,2}(K^2_{\Sigma}\oplus\O_{\Sigma})$, whereas it will have a family of $A_1$-singularities along the infinity section. This agrees with the quotient of $\mathrm{Blow}_{\Sigma}S\times\P^1$ by the involution. Finally, blowing down the proper transform of $T\times\{0\}$ gives $Y$, a trivial fibration over $\P^1-\{0\}$ with fibres isomorphic to $T$, and fibre over $0$ given by $\P_{1,2}(K^2_{\Sigma}\oplus\O_{\Sigma})$ with the section at infinity blown down, i.e., the one-point compactification $\overline{K^2_{\Sigma}}$ of $K^2_{\Sigma}$.

\begin{remark}
If $d>1$ then $\O(\Delta_T)=\O(-2K_T)$ is very ample on $T$ and we have an embedding
$$T\hookrightarrow\P(\H^0(T,\Delta_T)^{\vee})\cong \P^{3d}$$
which restricted to $\Delta_T\subset T$ gives the bicanonical embedding $\Delta_T\hookrightarrow\P^{3d-1}$ (recall that $\Delta_T$ has genus $g_{\Delta}=d+1$). Take the cone over $T$ in $\P^{3d+1}$, take the pencil of hyperplanes in $\P^{3d+1}$ that contain $\Delta_T$, and intersect them with the cone over $T$ to get a pencil of surfaces; this is precisely the family of surfaces $Y\rightarrow\P^1$.
\end{remark}

To complete our description in the second approach we need to define the double cover of $Y$. We do this by describing the branch locus. We start with the divisor
$$\Delta_T\times\P^1\subset T\times\P^1.$$
After the weighted blow up of the curve
$$\Sigma\cong\Delta_T\subset T\times\{0\}\subset T\times\P^1,$$
we take the proper transform of $\Delta_T\times\P^1$ plus the singular locus, i.e., the infinity section of the exceptional locus $\P_{1,2}(K^2_{\Sigma}\oplus\O_{\Sigma})$. Finally we blow down the proper transform of $T\times\{0\}$ to get $Y$. The resulting branch locus is the proper transform of $\Delta_T\times\P^1$ union the point at infinity in $Y_0\cong\overline{K^2_{\Sigma}}$.

It is clear that these two approaches give the same result: the double cover of $Y$ branched over the given branch locus is $X$, and $X$ quotiented by the involution $\sigma$ is $Y$. In summary we have a $\P^1$-family of branched double covers of surfaces
$$\begin{array}{ccc}
 X & & \\
 \downarrow & \searrow & \\
 Y & \longrightarrow & \mathbb{P}^1
\end{array}$$
with $X_t\rightarrow Y_t$ isomorphic to $S\rightarrow T$ for $t\neq 0$ and $X_0\rightarrow Y_0$ isomorphic to $\overline{K}_{\Sigma}\rightarrow\overline{K^2_{\Sigma}}$.

\subsubsection{Deforming integrable systems}

To construct a $\P^1$-family of Prym fibrations from $X\rightarrow Y$ we imitate the ideas for the construction of the K3-del Pezzo system. Namely, we define a moduli space of sheaves on $X$ and then take the fixed locus of a (fibrewise symplectic) involution on this moduli space.

The ramification curve of $\overline{K}_{\Sigma}\rightarrow\overline{K^2_{\Sigma}}$ is the zero section $\Sigma\subset\overline{K}_{\Sigma}$. It deforms to the ramification curve $\Delta_S\subset S$ of $S\rightarrow T$. Let $w\in\H^{\bullet}(X,\Z)$ denote the Chern class of a degree $0$ line bundle on a curve in one of the linear systems $|2n\Delta_S|$, pushed forward to give a torsion sheaf on $X$. Define $\mathcal{M}_{X/\P^1}(w)$ to be the moduli space of semistable sheaves on $X$ with Chern class $w$ that are supported in fibres of $X\rightarrow\P^1$. Each such sheaf is supported on a curve in a linear system $|2n\Delta_S|$ for some $X_t$, or in $|2n\Sigma|$ for $X_0=\overline{K}_{\Sigma}$. These linear systems form a $\P^{4n^2d+1}$-bundle $P$ over $\P^1$, and there is a scheme-theoretic support map
$$\begin{array}{ccc}
\mathcal{M}_{X/\P^1}(w) & \longrightarrow & P \\
 & \searrow & \downarrow \\
 & &\mathbb{P}^1.
\end{array}$$

The involution $\sigma$ on $X$ induces an involution $\sigma^*$ on sheaves on $X$. Assume that $X$ is polarized by a $\sigma$-invariant divisor (in fact, a $\sigma$-invariant relatively ample divisor on $X\rightarrow\P^1$ will suffice; the proper transform of $\Delta_S\times\P^1\subset S\times\P^1$ is an example). Then $\sigma^*$ will preserve semistability and give an involution of $\mathcal{M}_{X/\P^1}(w)$, anti-symplectic on the fibres of $\mathcal{M}_{X/\P^1}(w)\rightarrow\P^1$.

Next we want to extend duality on a fibre to an involution on the whole moduli space. Actually what we describe below is still only a rational involution, but it will suffice for our construction.

\begin{lemma}
Let $\iota_*L\in\mathcal{M}_{X/\P^1}(w)$, where $L$ is a line bundle on a smooth curve $\iota:C\hookrightarrow X$ contained in a fibre $X_t$ with $t\neq 0$. Then
$$\iota_*L^{\vee}:=\iota_*\mathcal{H}om_C(L,\O_C)\cong\mathcal{H}om_X(\iota_*L,\O_C)\cong\mathcal{E}xt_X^2(\iota_*L,\O_X(-X_t-2n\Delta))$$
where $\Delta$ is the proper transform of $\Delta_S\times\P^1\subset S\times\P^1$ in $X$.
\end{lemma}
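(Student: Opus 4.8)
The plan is to imitate the proof of the analogous statement for $\mathcal{M}_S(v)$ given earlier in the excerpt (the chain $\iota_*L^\vee\cong\mathcal{H}om_S(\iota_*L,\O_C)\cong\mathcal{E}xt^1_S(\iota_*L,\O_S(-C))$), but now carried out on the threefold $X$ instead of on the surface $S$. The first isomorphism $\iota_*\mathcal{H}om_C(L,\O_C)\cong\mathcal{H}om_X(\iota_*L,\O_C)$ is formal: since $\iota\colon C\hookrightarrow X$ is a closed embedding, $\mathcal{H}om_X(\iota_*L,\iota_*(-))\cong\iota_*\mathcal{H}om_C(L,-)$ (adjunction for the pushforward along a closed immersion), applied to the structure sheaf $\O_C$. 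So the only real content is the second isomorphism $\mathcal{H}om_X(\iota_*L,\O_C)\cong\mathcal{E}xt^2_X(\iota_*L,\O_X(-X_t-2n\Delta))$.

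The key point is that $C$ sits inside $X$ as a codimension-two locally complete intersection: it is cut out inside the smooth surface $X_t$ (which is a divisor in $X$ of class $[X_t]$) by a section of the line bundle associated to the class $2n\Delta_S$ on $X_t$. Since $\Delta$ is the proper transform of $\Delta_S\times\P^1$, its restriction $\Delta|_{X_t}$ is exactly $\Delta_S$ for $t\neq 0$, so $\O_X(2n\Delta)|_{X_t}\cong\O_{X_t}(2n\Delta_S)$, and $C\in|2n\Delta_S|$ is the zero locus of a section of this bundle. Thus the ideal sheaf of $C$ in $X$ has a Koszul-type resolution built from $\O_X(-X_t)$ and $\O_X(-2n\Delta)$, and $C$ being lci of codimension two means its dualizing sheaf is $\omega_C\cong\omega_X\otimes\O_X(X_t+2n\Delta)|_C$. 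I would then run the standard local-to-global argument: apply $\mathcal{H}om_X(\iota_*L,-)$ to the Koszul/short exact sequences resolving $\O_C$, or more directly use Grothendieck–Serre duality along $\iota$, namely $\mathcal{E}xt^2_X(\iota_*L,\O_X(-X_t-2n\Delta))\cong\iota_*\mathcal{H}om_C(L,\omega_C\otimes\omega_X^{-1}\otimes\O_X(-X_t-2n\Delta)|_C)\cong\iota_*\mathcal{H}om_C(L,\O_C)$, where the last step uses the adjunction formula for $\omega_C$ just quoted. The vanishing of the lower $\mathcal{E}xt$'s $\mathcal{E}xt^i_X(\iota_*L,\O_X(-X_t-2n\Delta))$ for $i<2$ follows because $\iota_*L$, being a pure one-dimensional sheaf on the threefold $X$ and moreover a line bundle on the lci curve $C$, has homological dimension exactly $2$ at every point, with the top $\mathcal{E}xt$ carrying all the duality information (this is the threefold analogue of Proposition~1.1.10 of Huybrechts–Lehn~\cite{hl97} used earlier); one may also see this chart-locally from the Koszul resolution $0\to\O_X(-X_t-2n\Delta)\to\O_X(-X_t)\oplus\O_X(-2n\Delta)\to\O_X\to\O_C\to 0$ tensored with $L$ and the observation that multiplication by the defining sections annihilates $\iota_*L$.

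The main obstacle I anticipate is bookkeeping the twist precisely: one must check that the normal bundle of $C$ in $X$ is $N_{C/X}\cong\O_X(X_t)|_C\oplus\O_X(2n\Delta)|_C$ and hence that the relative dualizing sheaf of $\iota$ is $\omega_\iota\cong\det N_{C/X}\cong\O_X(X_t+2n\Delta)|_C$, so that $\omega_C\cong\omega_X\otimes\omega_\iota$. Since $X$ is not Calabi–Yau — unlike the surface case where $\omega_S\cong\O_S$ made the twist simply $\O_S(-C)$ — the canonical bundle $\omega_X$ enters, and one has to verify it is absorbed correctly; here the restriction of $\O_X(-X_t-2n\Delta)$ to $X_t$ together with $\omega_X|_{X_t}\cong\omega_{X_t}\otimes\O_X(-X_t)|_{X_t}$ (adjunction for the divisor $X_t\subset X$) and $\omega_{X_t}\cong\O_{X_t}$ (each $X_t\cong S$ is K3) conspire so that the twist seen on $X_t$ is exactly $\O_{X_t}(-2n\Delta_S)=\O_{X_t}(-C)$, recovering the surface computation fibrewise. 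The restriction $t\neq 0$ in the hypothesis is precisely to ensure $X_t$ is smooth (a genuine K3) and $C$ is a genuine lci curve in it; the degenerate fibre $X_0=\overline{K}_\Sigma$ is singular at the apex, which is why the statement is only claimed for $t\neq 0$ and the resulting involution is only rational. A final, more routine step — needed for the construction but strictly beyond the lemma as stated — is to note that $\mathcal{F}\mapsto\mathcal{E}xt^2_X(\mathcal{F},\O_X(-X_t-2n\Delta))$ extends this fibrewise duality to (a rational involution on) all of $\mathcal{M}_{X/\P^1}(w)$, exactly as the $\mathcal{E}xt^1_S(-,\O_S(-C))$ construction did for $\mathcal{M}_S(v)$.
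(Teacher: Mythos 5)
Your proof is correct, and the two formal ingredients (the adjunction $\mathcal{H}om_X(\iota_*L,\O_C)\cong\iota_*\mathcal{H}om_C(L,\O_C)$ and the twist bookkeeping $\det N_{C/X}\cong\O_X(X_t+2n\Delta)|_C$, which makes the twist in $\mathcal{E}xt^2_X(-,\O_X(-X_t-2n\Delta))$ cancel exactly) are exactly the right things to check; but your main route differs from the paper's. You invoke Grothendieck--Serre duality along the codimension-two lci embedding $\iota$ (equivalently, a single Koszul resolution of $\O_C$ in the threefold $X$), jumping directly from $\mathcal{H}om_C(L,\O_C)$ to $\mathcal{E}xt^2_X$. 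The paper instead runs a two-step d\'evissage: it first reuses verbatim the surface computation on $X_t$ to get $\mathcal{H}om_{X_t}(\iota_*L,\O_C)\cong\mathcal{E}xt^1_{X_t}(\iota_*L,\O_{X_t}(-2n\Delta\cap X_t))$, and then applies $\mathcal{H}om_X(\iota_*L,-)$ to the divisor sequence $0\to\O_X(-X_t-2n\Delta)\to\O_X(-2n\Delta)\to\O_{X_t}(-2n\Delta)\to 0$, killing the outer terms by Proposition~1.1.6 of Huybrechts--Lehn (vanishing of $\mathcal{E}xt^{<2}$ for a codimension-two sheaf) and by the multiplication-by-the-defining-section-of-$X_t$ argument, to identify $\mathcal{E}xt^1_{X_t}$ with $\mathcal{E}xt^2_X$. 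The paper's version is more elementary (no duality for the immersion is needed) and, importantly, never requires $C$ to be a \emph{global} complete intersection in $X$: your global Koszul resolution $0\to\O_X(-X_t-2n\Delta)\to\O_X(-X_t)\oplus\O_X(-2n\Delta)\to\O_X\to\O_C\to0$ tacitly assumes the section of $\O_{X_t}(2n\Delta_S)$ cutting out $C$ extends to a section of $\O_X(2n\Delta)$ on the threefold, which you should either verify or sidestep by sticking to the purely local duality statement (the local lci structure is automatic since $C$ is a divisor in the smooth fibre $X_t$). Conversely, your duality formulation makes the cancellation of twists conceptually transparent and explains in one line why the answer is $\O_C$ and not some unexpected twist of it; both arguments are sound.
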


\begin{proof}
We have already seen that applying $\mathcal{H}om_{X_t}(\iota_*L,-)$ to the short exact sequence
$$0\longrightarrow\O_{X_t}(-C)\longrightarrow\O_{X_t}\longrightarrow\O_C\longrightarrow 0$$
gives
$$\mathcal{H}om_{X_t}(\iota_*L,\O_C)\cong\mathcal{E}xt_{X_t}^1(\iota_*L,\O_{X_t}(-C))=\mathcal{E}xt_{X_t}^1(\iota_*L,\O_{X_t}(-2n\Delta\cap X_t)).$$
(Here we are temporarily using $\iota$ to denote the inclusion $C\hookrightarrow X_t$.)
Similarly, applying $\mathcal{H}om_X(\iota_*L,-)$ to the short exact sequence
$$0\longrightarrow\O_X(-X_t-2n\Delta)\longrightarrow\O_X(-2n\Delta)\longrightarrow\O_{X_t}(-2n\Delta)\longrightarrow 0$$
gives
$$\mathcal{E}xt^1_X(\iota_*L,\O_X(-2n\Delta))\longrightarrow\mathcal{E}xt^1_X(\iota_*L,\O_{X_t}(-2n\Delta))\longrightarrow\hspace*{40mm}$$
$$\hspace*{40mm}\longrightarrow\mathcal{E}xt_X^2(\iota_*L,\O_X(-X_t-2n\Delta))\longrightarrow\mathcal{E}xt_X^2(\iota_*L,\O_X(-2n\Delta)).$$
The first term vanishes by Proposition~1.1.6 of Huybrechts and Lehn~\cite{hl97} because $\iota_*L$ is of dimension one, and although $X$ is not smooth, the support of $\iota_*L$ lies in the smooth locus of $X$. The last map is induced by multiplication by the section of $\O_X(X_t)$ that vanishes on $X_t$, and is therefore the zero map. Therefore we have
$$\mathcal{E}xt^1_X(\iota_*L,\O_{X_t}(-2n\Delta))\cong\mathcal{E}xt_X^2(\iota_*L,\O_X(-X_t-2n\Delta)),$$
and combining this with the earlier isomorphism (after pushing everything forward to $X$) completes the proof.
\end{proof}

\begin{remark}
The lemma is also true if the smooth curve $C$ is contained in the special fibre $X_0$, provided it does not meet the apex of the cone $X_0\cong\overline{K}_{\Sigma}$. We only require that the support $C$ of $\iota_*L$ lie in the smooth locus of $X$.
\end{remark}

\begin{lemma}
Define a map $\mathcal{M}_{X/\P^1}(w)\dashrightarrow\mathcal{M}_{X/\P^1}(w)$ by
$$\mathcal{F}\longmapsto\mathcal{E}xt_X^2(\mathcal{F},\O_X(-X_t-2n\Delta)).$$
Then this map is well-defined on the locus of sheaves $\mathcal{F}\in\mathcal{M}_{X/\P^1}(w)$ whose support is contained in $X_t$ for $t\neq 0$ or is contained in $X_0$ but does not meet the apex of the cone $X_0\cong\overline{K}_{\Sigma}$. Moreover, it is a rational involution on $\mathcal{M}_{X/\P^1}(w)$ and anti-symplectic on the fibres of $\mathcal{M}_{X/\P^1}(w)\rightarrow\P^1$.
\end{lemma}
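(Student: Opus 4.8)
**

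The plan is to verify the three claims---well-definedness on the stated locus, the involution property, and anti-symplecticity on fibres---by reducing everything to the fibrewise statements already established for $\mathcal{M}_S(v)$, using the previous lemma as the bridge between the relative $\mathcal{E}xt$-functor on $X$ and the absolute duality on each fibre $X_t$.

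First I would establish well-definedness. Fix $\mathcal{F}\in\mathcal{M}_{X/\P^1}(w)$ supported on a curve $C$ in a fibre $X_t$, where the support lies in the smooth locus of $X$ (automatic for $t\neq 0$, and for $t=0$ this is exactly the condition that $C$ avoids the apex of $\overline{K}_{\Sigma}$). As in the proof of the previous lemma, since $\mathcal{F}$ has pure dimension one with support in the smooth locus, Proposition~1.1.6 of~\cite{hl97} gives $\mathcal{E}xt_X^i(\mathcal{F},\O_X(-X_t-2n\Delta))=0$ for $i\leq 1$, and a local computation near $C$ (where $X$ looks like a smooth surface inside a smooth threefold) shows $\mathcal{E}xt_X^i$ vanishes for $i\geq 3$ as well, so only $\mathcal{E}xt_X^2$ survives and it is again a pure one-dimensional sheaf supported on $C\subset X_t$. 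To see that the Chern class is preserved and that semistability is preserved, I would use the previous lemma together with the restriction $\O_X(-X_t-2n\Delta)|_{X_t}\cong\O_{X_t}(-C)$: on the locus of line bundles on smooth curves the new sheaf is $\iota_*L^{\vee}$, which manifestly has the same support and degree-$0$ line bundle data, hence Chern class $w$; semistability then follows because, after twisting by a line bundle, $\mathcal{F}\mapsto\mathcal{E}xt_X^2(\mathcal{F},\O_X(-X_t-2n\Delta))$ restricted to a fixed fibre $X_t$ is precisely the duality functor $\mathcal{E}xt^1_{X_t}(-,\omega_{X_t})\otimes(\hbox{line bundle})$ of Definition~1.1.7 of~\cite{hl97}, which is known to preserve semistability of pure sheaves (Proposition~1.1.10 of~\cite{hl97}). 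The subtlety is that the twisting line bundle and even the ambient fibre vary with $\mathcal{F}$; but since it varies algebraically over the base and the support map is proper over $\P^1\setminus\{0\}$ and over the good locus of $X_0$, this produces an algebraic morphism on the stated open locus.

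Next, the involution property. On the open dense locus of line bundles $\iota_*L$ on smooth $\sigma$-irrelevant curves in fibres, the previous lemma identifies the map with $\iota_*L\mapsto\iota_*L^{\vee}$, and $(L^{\vee})^{\vee}\cong L$ on a smooth curve, so the map squares to the identity there. Since both the map and its square are morphisms on the open locus described, and they agree on a dense subset, they agree on all of it; thus the map is a rational involution on $\mathcal{M}_{X/\P^1}(w)$. (One can also argue more intrinsically: fibrewise, $\mathcal{E}xt^1_{X_t}(-,\omega_{X_t})$ is an involution on pure one-dimensional sheaves by Proposition~1.1.10 of~\cite{hl97}, and the line-bundle twists in the two iterations cancel by the adjunction identity $\omega_{X_t}\cong\O_X(X_t)\otimes\omega_X|_{X_t}$; but the dense-subset argument is cleaner given that we only need a rational involution.)

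Finally, anti-symplecticity on the fibres of $\mathcal{M}_{X/\P^1}(w)\rightarrow\P^1$. For $t\neq 0$, the fibre is $\mathcal{M}_{X_t}(v)\cong\mathcal{M}_S(v)$, and under this identification the map becomes exactly the duality involution $\mathcal{F}\mapsto\mathcal{E}xt^1_S(\mathcal{F},\O_S(-C))$ studied in the construction of the K3-del Pezzo system, which was shown there to be anti-symplectic---because, on the open set corresponding to line bundles on smooth curves, it is multiplication by $-1$ in the fibres of the Lagrangian fibration $\mathcal{M}_S(v)\rightarrow|2n\Delta_S|$, hence acts by $-1$ on the symplectic form. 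The same local description holds on $X_0$ away from the apex: there the map is $\iota_*L\mapsto\iota_*L^{\vee}$, which in the standard local coordinates on a relative Jacobian (base coordinates from $|2n\Sigma|$, fibre coordinates from $\mathrm{H}^1(C,\O_C)$) is multiplication by $-1$ in the Jacobian fibres, hence anti-symplectic for the fibrewise symplectic form on $\mathcal{M}_{X/\P^1}(w)_0$. The main obstacle I anticipate is not any single one of these points but the bookkeeping required to make ``fibrewise symplectic form'' precise and to check that the anti-symplecticity statements for $t\neq 0$ and for $t=0$ fit together consistently---in particular, confirming that the relative symplectic form on $\mathcal{M}_{X/\P^1}(w)$ restricts on each fibre to the expected Beauville--Mukai/Hitchin form and that the local model ``multiplication by $-1$ in the fibres'' is valid uniformly; once the fibrewise picture is pinned down, each individual verification is a direct consequence of Propositions~1.1.6 and~1.1.10 of~\cite{hl97} together with the previous lemma.
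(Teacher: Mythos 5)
Your proof is correct and follows essentially the same route as the paper's: vanishing of the lower $\mathcal{E}xt$'s via Proposition~1.1.6 of Huybrechts--Lehn, identification with the dual sheaf (Definition~1.1.7) and the double-dual isomorphism of Proposition~1.1.10 on the smooth locus, semistability asserted, and anti-symplecticity read off from the previous lemma as multiplication by $-1$ in the Jacobian fibres. The one point the paper makes that you elide is that $X$ is a \emph{singular} threefold, so one must justify invoking the Huybrechts--Lehn machinery on it; the paper does this by observing (following Coughlan--Sano~\cite{cs18}) that $X$ is normal and Gorenstein with an isolated log canonical singularity, so Serre duality applies---though your restriction to sheaves supported in the smooth locus, where the $\mathcal{E}xt$ computation is local, also suffices for the claim as stated.
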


\begin{proof}
As observed by Coughlan and Sano~\cite{cs18}, the cone over a K3 surface is a normal, Gorenstein variety with an isolated log canonical singularity. The same is true for $X$, because it is just the cone over $S$ with the smooth curve $\Sigma=\Delta_S\subset S$ blown up. It follows that results like Serre duality and Kodaira vanishing hold on $X$ (for example, see Fujino~\cite{fujino09}). Therefore the proof of Proposition~1.1.6 of Huybrechts and Lehn~\cite{hl97} works in this setting to show that for a sheaf $\mathcal{F}$ of dimension one on $X$, $\mathcal{E}xt_X^i(\mathcal{F},\O_X(-X_t-2n\Delta))$ vanishes for $i=0$ and $1$. Up to tensoring with a line bundle
$$\mathcal{E}xt_X^2(\mathcal{F},\O_X(-X_t-2n\Delta))\cong\mathcal{E}xt_X^2(\mathcal{F},\omega_X)\otimes\omega_X^{-1}(-X_t-2n\Delta))$$
is the dual sheaf of $\mathcal{F}$, according to Definition~1.1.7 of~\cite{hl97}. Moreover, if $\mathcal{F}$ is a pure sheaf of dimension one whose support lies in the smooth locus of $X$ then the dual of its dual is isomorphic to $\mathcal{F}$ by Proposition~1.1.10 of~\cite{hl97}. One can also show that
$$\mathcal{F}\longmapsto\mathcal{E}xt_X^2(\mathcal{F},\O_X(-X_t-2n\Delta))$$
preserves semistability, and it therefore gives a rational involution on $\mathcal{M}_{X/\P^1}(w)$, defined for all sheaves $\mathcal{F}$ except those whose support is contained in $X_0$ and meets the apex of the cone $X_0\cong\overline{K}_{\Sigma}$. It is anti-symplectic on the fibres because the previous lemma shows that it is anti-symplectic on the open set where $\mathcal{F}=\iota_*L$ for a line bundle on a smooth curve $C\subset X_t$, $t\neq 0$, where it is given in local coordinates by multiplication by $-1$ in the fibres of the Lagrangian fibration $\mathcal{M}_{X_t}(w)\rightarrow P_t$.
\end{proof}

\begin{remark}
If the support of $\mathcal{F}$ is contained in $X_0$ and meets the apex of the cone $X_0\cong\overline{K}_{\Sigma}$ then $\mathcal{E}xt_X^2(\mathcal{F},\O_X(-X_t-2n\Delta))$ is still a sheaf of dimension one, but it is possible that $\mathcal{E}xt_X^3(\mathcal{F},\O_X(-X_t-2n\Delta))$, which is supported at the apex of $X_0$, is also nonzero. This would mean that $\mathcal{E}xt_X^2(\mathcal{F},\O_X(-X_t-2n\Delta))$ would not have the required Mukai vector $w$, for instance.
\end{remark}

\begin{question}
Is this involution well-defined on all of $\mathcal{M}_{X/\P^1}(w)$? If $\mathcal{F}$ is isomorphic to the pushforward $\iota_*\mathcal{E}$ of a locally free sheaf $\mathcal{E}$ on its support $\iota:C\hookrightarrow X$ then we expect that $\mathcal{E}xt_X^2(\mathcal{F},\O_X(-X_t-2n\Delta))$ will be isomorphic to $\iota_*\mathcal{E}^{\vee}$ and $\mathcal{E}xt_X^3(\mathcal{F},\O_X(-X_t-2n\Delta))$ will vanish, even if the support is contained in $X_0$ and meets the apex of the cone $X_0\cong\overline{K}_{\Sigma}$. This suggests that the indeterminacy of the involution has codimension at least three, although one must be careful as the complement of $\mathrm{Jac}^0C$ in $\overline{\mathrm{Jac}}^0C$ might not be a divisor for a highly singular curve $C$ (in general, $\overline{\mathrm{Jac}}^0C$ could consist of several irreducible components, including components of dimension greater than the genus of $C$). Moreover, it is not clear that the indeterminacy having large codimension would necessarily imply that the involution can be extended to all of $\mathcal{M}_{X/\P^1}(w)$.
\end{question}

In any case, the rational involution will commute with the involution $\sigma^*$, so we can compose them to get a rational involution
\begin{eqnarray*}
\tau:\mathcal{M}_{X/\P^1}(w) & \dashrightarrow & \mathcal{M}_{X/\P^1}(w) \\
\mathcal{F} & \longmapsto & \mathcal{E}xt_X^2(\sigma^*\mathcal{F},\O_X(-X_t-2n\Delta)).
\end{eqnarray*}
The following is inspired by Definition~3.9 of Arbarello, Sacc{\`a}, and Ferretti~\cite{asf15}.

\begin{definition}
Let $\mathrm{Fix}(\tau)$ be the fixed locus of $\tau$ on the open subset of $\mathcal{M}_{X/\P^1}(w)$ where $\tau$ is regular, let $\mathrm{Fix}(\tau)^0$ be the component containing $\iota_*\O_C$ for some smooth curve $\iota:C\hookrightarrow X_t$ in the linear system $|2n\Delta_S|$, $t\neq 0$, and define
$$\mathcal{P}_{X/Y}(w):=\overline{\mathrm{Fix}(\tau)^0}\subset\mathcal{M}_{X/\P^1}(w)$$
to be the closure.
\end{definition}

The composition of two anti-symplectic maps is symplectic. Therefore $\tau$ preserves the fibrewise symplectic structure on $\mathcal{M}_{X/\P^1}(w)$ and its fixed locus $\mathcal{P}_{X/Y}(w)$ will be a $\P^1$-family of symplectic varieties. The following theorem is now clear.

\begin{thm}
For $t\neq 0$ we have $X_t/Y_t\cong S/T$ and $\mathcal{P}_{X/Y}(w)_t$ can be identified with the K3-del Pezzo system
$$\mathcal{P}_{S/T}(v)\rightarrow |n\Delta_T|\cong\P^{n(2n+1)d}.$$
For $t=0$ we have $X_0/Y_0\cong \overline{K}_{\Sigma}/\overline{K^2_{\Sigma}}$ and $\mathcal{P}_{X/Y}(w)_0$ can be identified with the natural compactification of the $\mathrm{Sp}$-Hitchin system $\mathcal{M}_{\mathrm{Sp}}(2n)\rightarrow A_{\mathrm{Sp}}$ induced by the inclusions $K_{\Sigma}\subset \overline{K}_{\Sigma}$, $K^2_{\Sigma}\subset\overline{K^2_{\Sigma}}$, and $A_{\mathrm{Sp}}\subset |n\Sigma|$. In this way we get a deformation of the K3-del Pezzo system to a compactification of the $\mathrm{Sp}$-Hitchin system.
\end{thm}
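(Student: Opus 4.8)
The plan is to establish the two identifications one fibre at a time, in each case reducing to structures already built. The key point is that all the ingredients of the construction --- the relative moduli space $\mathcal{M}_{X/\P^1}(w)$, the involution $\sigma^*$, the rational duality involution $\mathcal{F}\mapsto\mathcal{E}xt_X^2(\mathcal{F},\O_X(-X_t-2n\Delta))$, and their composition $\tau$ --- restrict to the fibre over $t\in\P^1$ to give exactly the data used either in the construction of the K3-del Pezzo system (for $t\neq 0$) or in Hitchin's spectral description of $\mathcal{M}_{\mathrm{Sp}}(2n)$ (for $t=0$). Since $\mathcal{P}_{X/Y}(w)=\overline{\mathrm{Fix}(\tau)^0}$, the theorem then drops out by taking closures.

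For $t\neq 0$ I would use the isomorphism of $X_t\to Y_t$ with $S\to T$ from the construction of the family. Under it $w|_{X_t}$ is the Mukai vector $v=(0,[2n\Delta_S],-4n^2d)$, so the fibre $\mathcal{M}_{X/\P^1}(w)_t$ is $\mathcal{M}_S(v)$, and $\sigma^*$ restricts to the anti-symplectic involution of $\mathcal{M}_S(v)$ coming from the covering involution of $S\to T$. Because $X_t$ is a fibre of $X\to\P^1$ its normal bundle is trivial, so $\O_X(-X_t-2n\Delta)|_{X_t}\cong\O_S(-2n\Delta_S)\cong\O_S(-C)$ for every $C\in|2n\Delta_S|$ (linearly equivalent divisors give isomorphic line bundles). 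Running the exact-sequence argument of the first Lemma above for an arbitrary dimension-one sheaf supported on $X_t$ --- the vanishing step and the step where the relevant map is multiplication by the section cutting out $X_t$ both go through verbatim --- then shows that $\mathcal{E}xt_X^2(-,\O_X(-X_t-2n\Delta))$ restricts to $\mathcal{E}xt_{X_t}^1(-,\O_{X_t}(-C))$, so $\tau|_{X_t}$ is precisely the involution $\tau$ defining $\mathcal{P}_{S/T}(v)$. Finally, since $X\to\P^1$ is trivial over $\P^1-\{0\}$ with all of this structure constant, $\mathrm{Fix}(\tau)^0$ restricts over $\P^1-\{0\}$ to $\mathcal{P}_{S/T}(v)\times(\P^1-\{0\})$ --- the distinguished components match because each is the component of $\iota_*\O_C$ --- and taking closures gives $\mathcal{P}_{X/Y}(w)_t\cong\mathcal{P}_{S/T}(v)$, with support map $\mathcal{P}_{S/T}(v)\to|n\Delta_T|\cong\P^{n(2n+1)d}$.

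For $t=0$ I would work over the open locus of $|2n\Sigma|$ of curves missing the apex of $X_0\cong\overline{K}_{\Sigma}$, where $\tau$ is regular. A point there is a degree $0$ line bundle $L$ on a curve $C\subset K_{\Sigma}$ in $|2n\Sigma|$; $\sigma^*$ acts by multiplication by $-1$ in the fibres of $K_{\Sigma}$ and the duality involution by $L\mapsto L^{\vee}$, so $\mathrm{Fix}(\tau)$ consists of the $\sigma$-invariant curves $C$ --- equivalently the quotients $D=C/\sigma$ in $|n\Sigma|\subset K^2_{\Sigma}$, i.e.\ the points of $A_{\mathrm{Sp}}$ --- together with the line bundles $L$ satisfying $\sigma^*L\cong L^{\vee}$, and $\mathrm{Fix}(\tau)^0$ selects the component $\mathrm{Prym}(C/D)$. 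By Hitchin's spectral construction (fixing a square root of $K_{\Sigma}$, so that $E=\pi_*(L\otimes\pi^*K_{\Sigma}^{n-1/2})$ has $\deg E=0$ when $\deg L=0$) this locus is exactly $\mathcal{M}_{\mathrm{Sp}}(2n)\to A_{\mathrm{Sp}}$. As $\mathcal{M}_{X/\P^1}(w)$ is proper over $\P^1$, the fibre $\mathcal{P}_{X/Y}(w)_0$ of $\overline{\mathrm{Fix}(\tau)^0}$ over $0$ is compact and contains $\mathcal{M}_{\mathrm{Sp}}(2n)$ as a dense open subset, with base the closure of $A_{\mathrm{Sp}}$ in $|n\Sigma|\cong\P^{n(2n+1)d}$ and fibration structure the closure of the Hitchin map; this is the asserted natural compactification.

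I expect the $t=0$ fibre to be the main obstacle. Since $\tau$ is only a rational involution, undefined on sheaves supported at the apex of $\overline{K}_{\Sigma}$, the space $\mathcal{P}_{X/Y}(w)_0$ is produced only as a closure, and one must check it is a genuine compactification rather than acquiring extra or lower-dimensional behaviour --- concretely, that $\mathcal{M}_{\mathrm{Sp}}(2n)$ is dense in it. This should follow from what is known about the K3-del Pezzo system: $\mathcal{P}_{S/T}(v)$ is irreducible of dimension $2n(2n+1)d$, so $\overline{\mathrm{Fix}(\tau)^0}$ is irreducible of dimension $2n(2n+1)d+1$ and its fibre over $t=0$ has dimension $2n(2n+1)d=\dim\mathcal{M}_{\mathrm{Sp}}(2n)$, while $\mathcal{M}_{\mathrm{Sp}}(2n)$, being the union over $A_{\mathrm{Sp}}$ of the connected Prym varieties, is an irreducible subvariety of that fibre and hence dense in it. A related point to verify is that $\mathrm{Fix}(\tau)^0$ does limit onto the apex-avoiding part of $X_0$, so that the identification above takes place over a non-empty --- in fact dense --- locus of the base; this holds for generic one-parameter families of spectral curves because $A_{\mathrm{Sp}}$ is dense in $|n\Sigma|$.
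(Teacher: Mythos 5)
Your argument follows the same route as the paper, which in fact offers no proof beyond the remark that the theorem ``is now clear'' once $X/Y$, $\mathcal{M}_{X/\P^1}(w)$ and $\tau$ have been constructed; your fibrewise identifications (restricting $\O_X(-X_t-2n\Delta)$ to $\O_{X_t}(-2n\Delta_S)$ via triviality of the normal bundle of a fibre, using triviality of the family over $\P^1\setminus\{0\}$ to match the distinguished components, and invoking Hitchin's spectral description over the apex-avoiding locus of $X_0$) are exactly the intended ones and are correct. The one step to treat with more care is your final density claim: the dimension count shows that every component of $\mathcal{P}_{X/Y}(w)_0$ has dimension $2n(2n+1)d$ and that $\overline{\mathcal{M}_{\mathrm{Sp}}(2n)}$ is one such component, but concluding that $\mathcal{M}_{\mathrm{Sp}}(2n)$ is dense in \emph{all} of $\mathcal{P}_{X/Y}(w)_0$ tacitly assumes the central fibre is irreducible, which does not follow merely from irreducibility of $\overline{\mathrm{Fix}(\tau)^0}$ --- a point on which the paper itself is also silent.
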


\begin{flushleft}
Department of Mathematics\hfill sawon@email.unc.edu\\
University of North Carolina\hfill cshen@live.unc.edu\\
Chapel Hill NC 27599-3250\hfill sawon.web.unc.edu\\
USA\\
\end{flushleft}

\end{document}